\numberwithin{equation}{section}
\newcommand{\R}{{\mathbb R}}
\newcommand{\eps}{\varepsilon}
\newcommand{\intr}{{\int_{\R^N}}}
\newcommand{\refe}[1]{{(\ref{#1})}}
\renewcommand{\theta}{\vartheta}
\newcommand{\wsobr}{W^{1,p}_{{\rm rad}}(\R^{N})}
\numberwithin{equation}{section}
\newtheorem{theorem}{Theorem}[section]
\newtheorem{proposition}[theorem]{Proposition}
\newtheorem{lemma}[theorem]{Lemma}
\newtheorem{remark}[theorem]{Remark}
\newtheorem{definition}[theorem]{Definition}
\theoremstyle{definition}
\newcommand{\beq}{\begin{equation}}
\newcommand{\eeq}{\end{equation}}
\title[mountain pass solutions for quasi-linear equations]{Mountain pass 
solutions for quasi-linear \\ equations via a monotonicity trick}
\author{Benedetta Pellacci}
\address{Dipartimento di Scienze Applicate
\newline\indent
Universit\`a di Napoli Parthenope
\newline\indent
Isola C4, I-80143 Napoli, Italy}
\email{pellacci@uniparthenope.it}
\author{Marco Squassina}
\address{Dipartimento di Informatica
\newline\indent
Universit\`a degli Studi di Verona
\newline\indent
C\'a Vignal 2, Strada Le Grazie 15, I-37134 Verona, Italy}
\email{marco.squassina@univr.it}
\thanks{The second author was partially supported 
by 2007 MIUR Project: {\em Metodi Variazionali e Topologici
nello Studio di Fenomeni non Lineari}}
\begin{document}
	

\subjclass[2000]{74G65; 35J62; 35A15; 35B06; 58E05}

\keywords{Non-smooth critical point theory, monotonicity trick, Palais-Smale condition.}

\begin{abstract}
We obtain the existence of symmetric mountain pass solutions for  quasi-linear 
equations without the typical assumptions which guarantee the 
boundedness of an arbitrary Palais-Smale sequence. This is  done through a recent 
version of the monotonicity trick proved in \cite{montrick}. The main results
are new also for the $p$-Laplacian operator.
\end{abstract}
\maketitle

\section{Introduction}
Let $N>p>1$. In the study of the quasi-linear partial differential equation
\begin{equation}
	\label{eq-iintro}
-{\rm div}(j_{\xi}(u,Du))+j_{s}(u,Du)+V(x) |u|^{p-2}u=g(u),\quad u\in W^{1,p}(\R^N)
\end{equation}
by means of variational methods, a rather typical assumption on $j(s,\xi)$ and $g(s)$ is that
there exist $p<q<Np/(N-p)$, $\delta>0$ and $R\geq 0$ such that
\begin{equation}
	\label{generalARcond}
q j(s,\xi)-j_s(s,\xi)s-(1+\delta)j_\xi(s,\xi)\cdot\xi- q G(s)+g(s)s\geq 0,
\end{equation}
for all $s\in\R$ such that $|s|\geq R$ and any $\xi\in\R^N$ (cf.~\cite{AB,candeg}). This condition ensures that {\em every} 
Palais-Smale sequence, in a suitable sense, of the associated functional $f:W^{1,p}(\R^N)\to\R,$
\begin{equation*}
f(u)=\intr j(u,Du)+\frac{1}{p}\intr V(x)|u|^{p}-\intr G(u),
\end{equation*}
is {\em bounded} in $W^{1,p}(\R^N)$. We might refer to this technical
condition as the generalized Ambrosetti-Rabinowitz condition, involving
the terms of the quasi-linear operator $j$. In fact, in the treatment of the non-autonomous semi-linear equation
\begin{equation}
	\label{semieq-iintro}
-\Delta u+V(x) u=g(u),\quad u\in H^1(\R^N),
\end{equation}
the previous inequality \eqref{generalARcond} reduces to the classical
Ambrosetti-Rabinowitz condition \cite{AR}, namely $0<q G(s)\leq g(s)s$, 
for every $s\in\R$ with $|s|\geq R$. Of course, aiming to achieve the existence of
{\em multiple} solutions for equation \eqref{eq-iintro}, one needs to know that the Palais-Smale condition
for $f$ is satisfied at an {\em arbitrary} energy level, and hence it is necessary to guarantee 
that Palais-Smale sequences are always at least bounded, through condition \eqref{generalARcond}.
On the contrary, under suitable assumptions, if one merely focuses on the 
existence of a nonnegative Mountain Pass solution of \eqref{eq-iintro}, it is reasonable
to expect that by a clever selection of a {\em special} Palais-Smale sequence at the Mountain Pass level $c$
one could reach the goal of getting a solution to \eqref{eq-iintro} without knowing that the Palais-Smale condition holds. 
The existence of such a nice sequence is possible since the definition
of $c$ allows to detect continuous paths $\gamma:[0,1]\to W^{1,p}(\R^N)$ with a very good behavior.
The idea, considering for instance problems \eqref{semieq-iintro}, is to see $f=f_1$ as the end point of the continuous 
family of $C^1$ functionals $f_\lambda:H^1(\R^N)\to\R,$
$$
f_{\lambda}(u)=\frac12\intr |Du|^2+\frac12\intr V(x)|u|^{2}-\lambda\intr G(u).
$$
When $f_{\lambda}$ satisfies a uniform Mountain Pass geometry, then it is possible to use  the 
so called monotonicity trick for $C^1$ smooth functionals, originally discovered by Struwe \cite{struwem} in a very special setting 
and generalized and formalized later in an abstract framework
by Jeanjean \cite{Jj} and Jeanjean-Toland \cite{JT}. This strategy 
provides a bounded Palais-Smale sequence for all $\lambda$ fixed, up to a set of null measure.
Then, by requiring some compactness condition one can detect 
a sequence  $(\lambda_j)$, increasingly converging to $1$, for which there corresponds a 
sequence $(u_{\lambda_j})$ of solutions to \eqref{semieq-iintro} at the Mountain Pass level $c_{\lambda_j}$, namely
\begin{equation}
	\label{minmaxclass}
c_\lambda=\inf_{\gamma\in \Gamma}\sup_{t\in [0,1]} f_\lambda(\gamma(t)),\quad\,\,
\Gamma=\{\gamma\in C([0,1],W^{1,p}(\R^N)):\gamma(0)=0,\gamma(1)=w\},
\end{equation}
being $w\in W^{1,p}(\R^N)$ a suitable function with $f_\lambda(w)<0$ for any value of $\lambda$.
Then, being $u_{\lambda_j}$ exact solutions, one can exploit the Poh\v ozaev identity and combine it with the energy level constraint 
to show in turn that $(u_{\lambda_j})$ is a {\em bounded} Palais-Smale sequence for $f_1$.
In the case of {\em semi-linear} equations such as \eqref{semieq-iintro}, we refer the reader to 
\cite{Jtanak,azzpomp} where the approach has been successfully developed. 
\vskip1pt
The main goal of this manuscript is twofold. On one hand, we intend 
 to show how condition \eqref{generalARcond} can be completely removed
by using a general version of the monotonicity 
trick recently developed in \cite{montrick} in the framework of 
the non-smooth critical point theory of \cite{dm,cdm}. 
In this respect, first, in order to analyze the most clarifying concrete 
situation, we consider a class of functionals invariant under 
orthogonal transformations, set in the space of radial functions (see Theorem~\ref{rad}). 
As in the smooth case, by studying a penalized functional
$f_{\lambda}$ we will obtain a sequence of $\lambda_{j}$ converging 
to one, with corresponding weak solutions $u_{\lambda_{j}}$. 
In order to obtain that the sequence  $(u_{\lambda_{j}})$ is bounded, a general version of the 
Poh\v ozaev identity \cite{dms} for merely $C^1$ weak solutions
will be crucial, as $C^{1,\alpha}$ is the optimal regularity if $p\neq 2$ \cite{tolks}. 
Moreover, a generalized version of the Palais' symmetric criticality principle recently achieved in \cite{sqradII} will be exploited.
These results are new also in the particular meaningful case $j(u,Du)=|Du|^{p}/p$ with $p\neq 2$,
being the case $p=2$ covered in \cite{azzpomp}.
On the other hand, when one does not restrict the functional to the space of 
radially symmetric functions (see Theorem~\ref{sym}), it is possible to make a stronger use of the 
result in \cite{montrick} to construct a bounded, almost symmetric (cf.\ \eqref{simmcondtiondef}), Palais-Smale sequence 
which will give a radial and radially decreasing solution. At the high level 
of generality of equation~\eqref{eq-iintro}, proving a priori that the radial solution 
is decreasing seems a particularly strong fact. These results are new also 
for $j(u,Du)=|Du|^{p}/p$, even with $p=2$.
\vskip2pt

Let us now state the main results of the paper. 
Let $N> p>1$ and let $j:\R\times \R^{+}\to \R^{+}$ be a $C^{1}$ function 
such that the map $t\mapsto j(s,t) $ is increasing and strictly convex. 
Moreover, we assume that there exist $\alpha,\beta >0$ with
\begin{gather}
	\label{j0}
\alpha t^{p}\leq j(s,t)\leq \beta t^{p}, \qquad\text{for every $s\in \R$ and  $t\in \R^{+}$,}\\
\label{jder}
|j_{s}(s,t)|\leq \beta t^{p},\quad\,\, | j_{t}(s,t)|\leq \beta t^{p-1}, \qquad\text{for every $s\in \R$ and  $t\in \R^{+}$,}\\
\label{segno}
j_{s}(s,t)s\geq 0, \qquad\text{for every $s\in \R$ and  $t\in \R^{+}$}.
\end{gather}
Let $V:\R^+\to\R^+$ be a $C^1$ function such that there exist $m,M\in \R^+$ with
\beq\label{V0}
0<m\leq V(\tau)\leq M, \qquad\text{for every $\tau\in \R^{+}$.}
\eeq
Furthermore, we shall assume that
\beq\label{V1}
\|V'(|x|)|x|\|_{L^{N/p}(\R^N)}<\alpha p{\mathcal S},
\eeq
where $\alpha$ is the number appearing in \eqref{j0}
and ${\mathcal S}$ is the best Sobolev constant.
Apart from the natural growths \eqref{j0}-\eqref{jder}, conditions \eqref{segno} is a 
typical requirement in the frame of quasi-linear 
equations, which helps \cite{AB,candeg,pelUMI,squTO,monog}
in the achievement of both existence and summability
issues related to equation \eqref{eq-iintro}.
Under \eqref{j0} and \eqref{V0}, the functional
defined either in $W^{1,p}_{{\rm rad}}(\R^N)$  or in  
$W^{1,p}(\R^N)$ as 
$$
u\mapsto \intr j(u,|Du|)+V(|x|)\frac{|u|^p}{p},
$$
is continuous but not even locally Lipschitz, as it can be easily checked. Moreover, it
admits Gateaux derivatives along any bounded direction $v$, but not on an arbitrary direction $v$ of 
either $W^{1,p}_{{\rm rad}}(\R^N)$ or $W^{1,p}(\R^N)$. This is the reason why we will make use of the abstract machinery developed
in \cite{dm,cdm} for continuous functionals, the related monotonicity 
trick proved in \cite{montrick} and  the Palais' symmetric criticality principle formulated in \cite{sqradII}.
\vskip4pt
\noindent
Let $p^*:=Np/(N-p)$ and consider the equation
\beq\label{equarad}
-{\rm div}\Big[j_{t}(u,|Du|)\frac{Du}{|Du|}\Big]+j_{s}(u,|Du|)+V(|x|) u^{p-1}=g(u)\quad\,\,\, \text{in $\R^{N}$}.
\eeq
Our first main result is the following

\begin{theorem}\label{rad}
Assume \eqref{j0}-\eqref{V1} and let $g:\R^{+}\to \R^{+}$ be 
continuous with $g(0)=0$ and extended by zero on $\R^-$. Moreover,
\beq\label{ipog}
\lim_{s\to 0^{+}}\frac{g(s)}{s^{p-1}}=\lim_{s\to+\infty}
\frac{g(s)}{s^{p^{*}-1}}=0,
\eeq
and, furthermore, for $G(s)=\int_0^s g(t)$,
\beq\label{Gpos}
\text{there exists $s>0$ such that $pG(s)-Ms^p>0$}.
\eeq
Then equation \eqref{equarad} admits a nontrivial, nonnegative, 
distributional and radially symmetric solution $u\in W^{1,p}(\R^{N})$.
\end{theorem}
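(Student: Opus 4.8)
The plan is to realize equation \eqref{equarad} as the end point $\lambda=1$ of the continuous family of functionals $f_\lambda:\wsobr\to\R$,
\[
f_\lambda(u)=\intr j(u,|Du|)+\frac1p\intr V(|x|)|u|^p-\lambda\intr G(u),
\]
with $\lambda$ ranging over an interval $[\bar\lambda,1]$ for some $\bar\lambda<1$ close enough to $1$, and to run the abstract monotonicity trick of \cite{montrick} for this family inside the non-smooth critical point theory of \cite{dm,cdm}. First I would verify a uniform Mountain Pass geometry. Since $G\ge0$ and $\lambda\le1$ one has $f_\lambda\ge f_1$, so it suffices to find $\rho>0$ with $\inf_{\|u\|=\rho}f_1(u)\ge a>0$: this follows from \eqref{j0}, \eqref{V0} and \eqref{ipog} (which, for every $\vep>0$, yields $g(s)\le\vep(s^{p-1}+s^{p^{*}-1})+C_\vep s^{q-1}$ for a fixed $p<q<p^{*}$) together with the Sobolev inequality. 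Using \eqref{Gpos} one then constructs a fixed $w\in\wsobr$ with $\|w\|>\rho$ and $p\intr G(w)>M\intr|w|^p$ — for instance a dilation $w=w_0(\cdot/R)$, $R$ large, of a profile $w_0$ equal to a value $s$ realizing \eqref{Gpos} on a large ball and then decaying to $0$ — so that $f_\lambda(w)\le f_{\bar\lambda}(w)<0$ for every $\lambda\in[\bar\lambda,1]$, provided $\bar\lambda$ is close to $1$. With the fixed class $\Gamma$ of paths joining $0$ to $w$ this produces Mountain Pass levels $c_\lambda\ge a>0$, nonincreasing in $\lambda$.

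Next, the monotonicity trick of \cite{montrick} applied to $(f_\lambda)$ gives, for almost every $\lambda\in[\bar\lambda,1]$, a \emph{bounded} Palais--Smale sequence for $f_\lambda$ at level $c_\lambda$ (in the weak-slope sense of \cite{dm,cdm}). For such a $\lambda$, the compact embedding $\wsobr\hookrightarrow\elle q$ for $p<q<p^{*}$, together with the growth control from \eqref{ipog}, allows one to pass to the limit and obtain a critical point $u_\lambda$ of $f_\lambda$ with $f_\lambda(u_\lambda)=c_\lambda>0$; the symmetric criticality principle of \cite{sqradII} then upgrades $u_\lambda$ to a distributional solution on $\RN$ of the $\lambda$-version of \eqref{equarad}. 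Testing formally with $u_\lambda^-$ and using \eqref{segno} and the fact that $g$ vanishes on $\R^-$ gives $u_\lambda\ge0$. By the regularity theory for quasi-linear elliptic equations \cite{tolks}, $u_\lambda\in C^1_{\rm loc}(\RN)$, so the generalized Poho\v zaev identity of \cite{dms} is available.

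Now fix $\lambda_j\uparrow1$ with corresponding nonnegative radial solutions $u_{\lambda_j}$. Combining the energy identity $f_{\lambda_j}(u_{\lambda_j})=c_{\lambda_j}\le c_{\bar\lambda}$ with the Poho\v zaev identity, the $G$-terms cancel and — after using $j_t(s,t)t\ge j(s,t)\ge\alpha t^p$ (a consequence of \eqref{j0} and the convexity of $t\mapsto j(s,t)$, which forces $j(s,0)=0$) and $j_t(s,t)t\le\beta t^p$ from \eqref{jder} — one is left with an inequality of the form
\[
\alpha\intr|Du_{\lambda_j}|^p\ \le\ Nc_{\bar\lambda}+\frac1p\big\|V'(|x|)|x|\big\|_{\elle{N/p}}\|u_{\lambda_j}\|_{\elle{p^{*}}}^p\ \le\ Nc_{\bar\lambda}+\frac1{p{\mathcal S}}\big\|V'(|x|)|x|\big\|_{\elle{N/p}}\intr|Du_{\lambda_j}|^p .
\]
By the smallness assumption \eqref{V1} the last term is strictly absorbable, giving a bound on $\|Du_{\lambda_j}\|_{\elle p}$; feeding this back into the energy identity and using \eqref{V0} together with \eqref{ipog} to control $\intr G(u_{\lambda_j})$ yields a bound on $\|u_{\lambda_j}\|_{\wsob}$. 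This is the crucial step, and the only place where \eqref{V1} is genuinely needed. Finally, the directional derivatives of $f_{\lambda_j}$ and of $f=f_1$ along a bounded direction $v$ differ by $(1-\lambda_j)\intr g(u_{\lambda_j})v$, which is infinitesimal uniformly for $\|v\|\le1$ since $\lambda_j\to1$ and $(u_{\lambda_j})$ is bounded; hence $(u_{\lambda_j})$ is a bounded Palais--Smale sequence for $f$. A last application of the radial compactness gives $u_{\lambda_j}\to u$ strongly in $\wsobr$ along a subsequence, and passing to the limit (using \eqref{j0}--\eqref{jder} and again \cite{sqradII}) shows that $u$ is a nonnegative, radially symmetric, distributional solution of \eqref{equarad}, nontrivial because $f(u)=\lim_j c_{\lambda_j}\ge a>0$. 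The main obstacle is the tension between the lack of local Lipschitz regularity of $f_\lambda$ — which forces reliance on the weak-slope machinery and on the precise form of the monotonicity trick in \cite{montrick} — and the regularity of the $u_\lambda$ required to legitimately invoke the Poho\v zaev identity in the boundedness argument.
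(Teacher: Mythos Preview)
Your proposal is correct and follows essentially the same route as the paper: uniform Mountain Pass geometry for the family $f_\lambda$, the non-smooth monotonicity trick of \cite{montrick} together with the symmetric criticality principle of \cite{sqradII} to produce solutions $u_{\lambda_j}$ at levels $c_{\lambda_j}$, the Poh\v ozaev identity of \cite{dms} combined with \eqref{V1} to bound $\|Du_{\lambda_j}\|_{L^p}$, and a final compactness argument on the resulting bounded Palais--Smale sequence for $f_1$. The only noticeable variant is that, once $\|Du_{\lambda_j}\|_{L^p}$ is bounded, the paper obtains the $L^p$ bound on $u_{\lambda_j}$ by testing the equation with $u_{\lambda_j}$ itself (using \eqref{segno} and $j_t(s,t)t\ge\alpha t^p$), whereas you extract it from the energy identity by estimating $\int G(u_{\lambda_j})$ via \eqref{ipog}; both arguments work, the paper's being slightly more direct.
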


\noindent
This result seems new even in the particular $p$-Laplacian case $j(s,t)=t^p/p$ with $p\neq 2$.
In order to prove Theorem~\ref{rad}, we consider the continuous functionals
$f_{\lambda}: W^{1,p}_{{\rm rad}}(\R^{N})\to \R$,
\begin{equation}
	\label{frad}
f_{\lambda}(u)=\intr j(u,|Du|)+\intr V(|x|)\frac{|u|^{p}}{p}-
\lambda\intr G(u),\qquad \lambda\in [\delta,1],
\end{equation}
for some suitable value of $\delta\in (0,1)$. 
First we shall prove that $f_\lambda$ fulfills a uniform Mountain Pass geometry.
Next we show that for all $\lambda\in (\delta,1]$ any bounded 
Palais-Smale sequence is, actually, strongly convergent. Furthermore, by applying the monotonicity trick
of \cite{montrick} and the Palais' symmetric criticality principle 
proved in \cite{sqradII} for continuous functionals, a sequence $\lambda_h\subset [\delta,1)$ with $\lambda_h\nearrow 1$ is detected
such that, for each $h\geq 1$, there exists a distributional solution $u_{\lambda_h}\in W^{1,p}_{{\rm rad}}(\R^{N})$ of
$$
-{\rm div}\Big[ j_{t}(u,|Du|)\frac{Du}{|Du|}\Big]+j_{s}(u,|Du|)+V(|x|) u^{p-1}=\lambda_h g(u)\qquad \text{in $\R^{N}$}
$$
at the Mountain Pass level $c_{\lambda_h}$.
Then, by exploiting a Poh\v ozaev identity \cite{dms} for $C^1$ solutions of \eqref{equarad}, 
we show in turn that $(u_{\lambda_h})$ is also a bounded Palais-Smale condition for $f_1$, 
and passing to the limit will provide the desired conclusion.
\vskip2pt
\noindent
Our second main result is the following

\begin{theorem}\label{sym}
Assume \eqref{j0}-\eqref{V1}, let $g:\R^{+}\to \R^{+}$ be continuous with $g(0)=0$,
extended by zero on $\R^-$, satisfying \eqref{Gpos}, and such 
that for all $\eps>0$ there is $C_{\eps}\in \R^{+}$ with
\beq\label{ipog2}
|g(s)|\leq \eps s^{p-1}+C_{\eps}s^{q-1},\quad p<q<p^{*},
\eeq
for every $s\in \R^{+}$. Let $V$ also satisfy 
\beq\label{pol}
|x|\leq |y| \,\,\;\Longrightarrow\,\,\; V(|x|)\leq V(|y|)\quad\text{for every  $x,y\in \R^{N}$.}
\eeq
Then equation \eqref{equarad} admits a nontrivial, nonnegative, distributional, 
radially symmetric and decreasing solution $u\in W^{1,p}(\R^{N})$.
\end{theorem}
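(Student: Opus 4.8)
The plan is to run essentially the same strategy as for Theorem~\ref{rad}, but now working on the full space $W^{1,p}(\R^N)$ instead of $W^{1,p}_{{\rm rad}}(\R^N)$, and then upgrading the Palais--Smale sequence produced by the monotonicity trick to an \emph{almost symmetric} one via the refined version of the result in \cite{montrick}. First I would introduce the penalized functionals $f_\lambda: W^{1,p}(\R^N)\to\R$ defined exactly as in \eqref{frad}, for $\lambda\in[\delta,1]$ with $\delta\in(0,1)$ chosen so that $pG(s_0)-Ms_0^p>0$ still forces $f_\delta(t s_0\chi)<0$ for some fixed bump; assumption \eqref{ipog2} (with the subcritical exponent $q$) together with \eqref{j0} and \eqref{V0} gives the coercivity-type lower bound near the origin, uniformly in $\lambda$, so that $f_\lambda$ has a uniform Mountain Pass geometry, with the path reaching a fixed $w$ where $f_\lambda(w)<0$ for all $\lambda$; this is the same computation as in the radial case and \eqref{Gpos} is what makes it work.

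Next I would invoke the monotonicity trick of \cite{montrick} in its form that detects, for almost every $\lambda$, a \emph{bounded} Palais--Smale sequence for $f_\lambda$ at the level $c_\lambda$ which is moreover almost symmetric in the sense of \eqref{simmcondtiondef} — here is where the monotonicity of $V$ in \eqref{pol} enters, since it guarantees that symmetric rearrangement does not increase the potential term, so the min-max level $c_\lambda$ is attained along paths that are (asymptotically) Schwarz-symmetric, and \eqref{segno} together with the convexity of $t\mapsto j(s,t)$ ensures the gradient term behaves correctly under rearrangement (Polya--Szeg\H o). Combining this bounded almost-symmetric Palais--Smale sequence with the compactness analysis — any bounded Palais--Smale sequence for $f_\lambda$, $\lambda\in(\delta,1]$, converges strongly, exactly as asserted in the discussion following Theorem~\ref{rad}, because \eqref{V1} controls the potential perturbation below the best Sobolev constant and \eqref{ipog2} kills the nonlinearity at both ends — yields, for a sequence $\lambda_h\nearrow 1$, distributional solutions $u_{\lambda_h}\in W^{1,p}(\R^N)$ at level $c_{\lambda_h}$ which are radially symmetric and radially decreasing. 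The symmetry of the limit is forced by passing the almost-symmetry condition \eqref{simmcondtiondef} to the strong limit, and monotonicity in the radial variable is inherited likewise.

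The remaining point is to pass from the sequence $(u_{\lambda_h})$ to a solution of \eqref{equarad} itself (i.e.\ $\lambda=1$). For this I would first show $(u_{\lambda_h})$ is bounded in $W^{1,p}(\R^N)$ by combining the energy identity $f_{\lambda_h}(u_{\lambda_h})=c_{\lambda_h}\le c_\delta$ with the Poh\v ozaev identity of \cite{dms} valid for $C^1$ weak solutions — the $C^{1,\alpha}$ regularity of \cite{tolks} makes this identity available — which together give a bound on $\|Du_{\lambda_h}\|_p$ and $\|u_{\lambda_h}\|_p$ independent of $h$; one then checks that $(u_{\lambda_h})$ is a bounded Palais--Smale sequence for the limiting functional $f_1$, using that $\lambda_h\to1$ and that $G(u_{\lambda_h})$ is controlled by \eqref{ipog2}. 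Strong convergence of bounded Palais--Smale sequences for $f_1$ (again from \eqref{V1}, \eqref{ipog2}) then produces the limit $u\in W^{1,p}(\R^N)$, which is nonnegative (since each $u_{\lambda_h}\ge0$, as $g\equiv0$ on $\R^-$), nontrivial (since $c_{\lambda_h}$ stays bounded away from $0$ by the uniform Mountain Pass geometry), radially symmetric and decreasing (limit of such), and distributional by stability of the equation under strong convergence — invoking the symmetric criticality principle of \cite{sqradII} to legitimate that criticality in the symmetric class gives criticality in the full space. The main obstacle I anticipate is the second step: producing the \emph{almost symmetric} bounded Palais--Smale sequence and verifying that the quasi-linear gradient term $\intr j(u,|Du|)$ interacts correctly with Schwarz symmetrization despite its $s$-dependence — the rearrangement inequality for $\intr j(u,|Du|)$ is not standard and requires care, using \eqref{j0}, \eqref{segno} and the convexity/monotonicity of $j$ in $t$; once that is in place, everything else parallels the proof of Theorem~\ref{rad}.
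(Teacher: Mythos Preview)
Your proposal is essentially the paper's own argument: uniform Mountain Pass geometry for $f_\lambda$ on $W^{1,p}(\R^N)$, the monotonicity trick of \cite{montrick} in its symmetric form to produce bounded almost-symmetric Palais--Smale sequences, a compactness result for such sequences (the paper's Proposition~\ref{comp2}), Schwarz-symmetric solutions $u_{\lambda_j}=u_{\lambda_j}^*$ at level $c_{\lambda_j}$, and then the Poh\v ozaev/energy argument to bound $(u_{\lambda_j})$ and pass to the limit exactly as in Theorem~\ref{rad}.

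Two small corrections. First, you should drop the appeal to the symmetric criticality principle of \cite{sqradII} at the end: here the functional is studied on the \emph{full} space $W^{1,p}(\R^N)$, so any critical point is automatically a distributional solution against all test functions in $C^\infty_c(\R^N)$; the symmetric criticality step is needed only in Theorem~\ref{rad}, where one works on $W^{1,p}_{\rm rad}$. Second, the obstacle you flag about rearrangement of $\int j(u,|Du|)$ is handled in \cite{montrick} not through a Polya--Szeg\H o inequality for Schwarz symmetrization but through \emph{polarization}: hypothesis $(\mathcal H_4)$ there requires only $f_\lambda(|u|^H)\le f_\lambda(u)$, and for polarization the pair $(u^H,|Du^H|)$ is an equimeasurable rearrangement of $(u,|Du|)$, so $\int j(u^H,|Du^H|)=\int j(u,|Du|)$ without any structural hypothesis on $j$ beyond measurability; condition \eqref{pol} on $V$ handles the potential term. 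Schwarz symmetry of the limit then comes from \eqref{simmcondtiondef} rather than from a direct Polya--Szeg\H o argument.
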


\noindent
This result seems new even in the particular $p$-Laplacian case $j(s,t)=t^p/p$, included $p=2$.
In place of \eqref{ipog}, here we need the slightly more restrictive condition \eqref{ipog2},
since we cannot work directly on sequences of radial functions, which enjoy uniform decay properties.
In order to prove Theorem~\ref{sym}, we argue on the continuous  functionals  
$f_{\lambda}: W^{1,p}(\R^{N})\to \R$ again defined as in \eqref{frad}
for all $\lambda\in (\delta,1]$, for a suitable $\delta\in (0,1)$. 
Hence here we do not
restrict the functional to the space of radially symmetric functions. 
However, we still proceed as indicated above for the proof
Theorem~\ref{rad}, but, by exploiting the symmetry properties of the functional under polarization (cf.\ \cite{montrick}) we use 
the symmetry features of the monotonicity trick of \cite{montrick} and
we obtain the existence of a bounded and almost symmetric (cf.\ \eqref{simmcondtiondef}) Palais-Smale sequence for $f_1$.
Possessing a compactness result for such sequences, we can conclude the proof.
We remark that in this second statement the solution found is not only
radially symmetric, but also automatically {\em radially decreasing}. 
While in Theorem~\ref{rad} the solution is found at the {\em restricted} Mountain Pass level
$$
c_{{\rm rad}}=\inf_{\gamma\in \Gamma_{{\rm rad}}}\sup_{t\in [0,1]} f_1(\gamma(t)),\quad
\Gamma_{{\rm rad}}=\{\gamma\in C([0,1],W^{1,p}_{{\rm rad}}(\R^N)):\gamma(0)=0,\gamma(1)=w\},
$$
in Theorem~\ref{sym} the solution is found at the {\em global} Mountain Pass level 
$$
c=\inf_{\gamma\in \Gamma}\sup_{t\in [0,1]} f_1(\gamma(t)),\quad
\Gamma=\{\gamma\in C([0,1],W^{1,p}(\R^N)):\gamma(0)=0,\gamma(1)=w\}.
$$
Of course, on one hand, we have $c\leq  c_{{\rm rad}}$. On the other hand it is not clear if, in 
general, one has $c=c_{{\rm rad}}$ or $c<c_{{\rm rad}}$ although, precisely as a {\em further consequence} of 
Theorem \ref{sym}, this occurs when $V$ is constant and the map $t\mapsto j(s,t)$ is $p$-homogeneous
(see Remark~\ref{finalremark}).

\section{Proof of Theorem \ref{rad}}
\label{proofradthm}
\noindent
We will prove Theorem \ref{rad} by studying the functionals 
$f_{\lambda}: W^{1,p}_{{\rm rad}}(\R^{N})\to \R$ defined in \eqref{frad}. Taking into account
assumptions \eqref{j0}, \eqref{V0} and~\eqref{ipog}, recalling \cite[Theorem A.VI]{berlions},
it follows that $f_\lambda$ is well defined and (merely) continuous.
In turn, we shall exploit the non-smooth critical point theory of \cite{dm,cdm}
including the connection between critical points in a suitable sense and 
solutions of the associated Euler's equation (see for instance \cite[Theorem 3]{pelUMI} 
and also \cite[Proposition 6.16]{sqradII} for the symmetric setting). More precisely
under assumption \eqref{j0}-\eqref{V1}, the critical points of $f_{\lambda}$ are distributional solutions of 
\begin{equation}
	\label{formulaz-eq-lambda}
-{\rm div}\Big[ j_{t}(u,|Du|)\frac{Du}{|Du|}\Big]+j_{s}(u,|Du|)+V(|x|) |u|^{p-2}u=\lambda g(u)\qquad \text{in $\R^{N}$}.
\end{equation}
\vskip4pt
\noindent
Combining the following two lemmas shows that the minimax class \eqref{minmaxclass} is nonempty and
that the family $(f_\lambda)$ enjoys a uniform Mountain Pass geometry
whenever $\lambda$ varies inside the interval $[\delta_0,1]$, for a suitable $\delta_0>0$.

\begin{lemma}\label{geo1}
Assume \eqref{j0}, \eqref{V0} and \eqref{ipog}-\eqref{Gpos}. Then there exists 
$\delta_{0}\in (0,1)$ and a curve $\gamma\in C([0,1], W^{1,p}_{{\rm rad}}(\R^{N}))$, 
independent of $\lambda$, such that $f_{\lambda}(\gamma(1))<0$, for every $\lambda\in [\delta_{0},1]$. 
\end{lemma}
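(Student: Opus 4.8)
The plan is to produce one fixed endpoint $w=\gamma(1)$, independent of $\lambda$, at which $f_\lambda$ is negative for every $\lambda$ close enough to $1$, and then to join $0$ to $w$ by the straight segment $\gamma(t)=tw$; recall that $f_\lambda$ is well defined and continuous on $\wsobr$ by \eqref{j0}, \eqref{V0} and \eqref{ipog}, so evaluating it at a test function is legitimate. By \eqref{j0} and \eqref{V0}, using $j(s,t)\le\beta t^p$, $V\le M$ and $G\ge0$ (since $g\ge0$ on $\R^+$ and $g\equiv0$ on $\R^-$),
\[
f_\lambda(u)\le \beta\intr|Du|^p+\frac{M}{p}\intr|u|^p-\lambda\intr G(u),\qquad u\in\wsobr,\ \lambda\in[0,1].
\]
By \eqref{Gpos} fix $s_0>0$ with $pG(s_0)-Ms_0^p>0$, so in particular $G(s_0)>0$; then
\[
\delta_0:=\frac12\Bigl(1+\frac{Ms_0^p}{pG(s_0)}\Bigr)\in(0,1),\qquad \kappa:=\frac12\Bigl(G(s_0)-\frac{M}{p}s_0^p\Bigr)>0,
\]
and one checks directly that $\frac{M}{p}s_0^p-\lambda G(s_0)\le-\kappa$ for every $\lambda\in[\delta_0,1]$.

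For $R\ge1$ let $u_R\in\wsobr$ be the radial Lipschitz function supported in the ball $B_{R+1}$ centered at the origin, with $u_R\equiv s_0$ on $B_R$ and $u_R(x)=s_0(R+1-|x|)$ for $R\le|x|\le R+1$ (a Berestycki--Lions type plateau function). Then $0\le u_R\le s_0$, one has $|Du_R|=s_0$ a.e.\ on $B_{R+1}\setminus B_R$ and $Du_R=0$ a.e.\ elsewhere, so $\intr|Du_R|^p=s_0^p|B_{R+1}\setminus B_R|$. Since $G(u_R)=G(s_0)$ on $B_R$ and $G(u_R)\ge0$ on the annulus, we have
\[
\frac{M}{p}\intr|u_R|^p-\lambda\intr G(u_R)\le |B_R|\Bigl(\frac{M}{p}s_0^p-\lambda G(s_0)\Bigr)+\frac{M}{p}s_0^p|B_{R+1}\setminus B_R|.
\]
Using $|B_R|=c_N R^N$ and $|B_{R+1}\setminus B_R|\le c_N' R^{N-1}$ for $R\ge1$ (with $c_N,c_N'>0$ dimensional) and adding the gradient term, we get, for every $\lambda\in[\delta_0,1]$,
\[
f_\lambda(u_R)\le \Bigl(\beta+\frac{M}{p}\Bigr)s_0^p c_N' R^{N-1}-\kappa\, c_N R^N,
\]
which tends to $-\infty$ as $R\to+\infty$, uniformly in $\lambda\in[\delta_0,1]$.

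Hence there is $\bar R\ge1$ with $f_\lambda(u_{\bar R})<0$ for every $\lambda\in[\delta_0,1]$; set $w:=u_{\bar R}$ and $\gamma(t):=tw$, $t\in[0,1]$. Then $w$ is radial and compactly supported, so $w\in\wsobr$; the map $t\mapsto tw$ is (Lipschitz) continuous from $[0,1]$ into $\wsobr$; $\gamma(0)=0$, $\gamma(1)=w$; and $f_\lambda(\gamma(1))<0$ for every $\lambda\in[\delta_0,1]$, which is the claim. I do not foresee a real obstacle here: this is the classical construction of a point beyond the mountain, carried over to the quasi-linear functional via the upper bound in \eqref{j0}. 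The only care needed is to keep every constant independent of $\lambda$, and it is exactly this uniformity requirement that forces the admissible interval to shrink from $[0,1]$ to $[\delta_0,1]$; the complementary uniform mountain pass geometry near the origin is handled separately in the subsequent lemma.
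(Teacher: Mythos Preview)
Your proof is correct. It differs from the paper's in two cosmetic respects. First, the paper invokes \cite[Step 1]{berlions} to assert the existence of some Schwarz symmetric $z\in\wsobr$ with $\intr(G(z)-\tfrac{M}{p}z^p)>0$ and then uses the dilation path $\eta(t)=z(\cdot/t)$, exploiting $N>p$ to make $f_\lambda(\eta(t))\le \beta t^{N-p}\|Dz\|_p^p - t^N\intr(\delta_0 G(z)-\tfrac{M}{p}z^p)\to-\infty$; you instead make the Berestycki--Lions construction explicit via the plateau function $u_R$ and send $R\to\infty$, which is essentially the same mechanism (volume $R^N$ beats surface $R^{N-1}$). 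Second, you connect $0$ to $w$ by the linear segment $t\mapsto tw$, while the paper uses the rescaled dilation $\gamma(t)=z(\cdot/(t_0t))$; both paths are continuous in $\wsobr$ and both are independent of $\lambda$, so either is admissible. Your explicit choice of $\delta_0$ and $\kappa$ is a clean way to track the $\lambda$-uniformity. Note also that your endpoint $w=u_{\bar R}$ is radially nonincreasing, hence $w=w^*$, which is exactly what is needed later when this lemma is reused in the proof of Theorem~\ref{sym}.
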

\begin{proof}
Due to \eqref{Gpos}, there exists 
$z\in W^{1,p}_{{\rm rad}}(\R^{N})$, $z\geq 0$ and Schwartz symmetric, such that
$$
\intr \Big(G(z)-\frac{M}{p} z^{p}\Big)>0.
$$
To see this, follow closely the first part of \cite[Step 1, pp.324-325]{berlions}. 
In turn, let $\delta_{0}\in (0,1)$ with
\begin{equation}
	\label{perturbposit}
\intr \Big(\delta_{0}G(z)-\frac{M}p z^{p}\Big)>0,
\end{equation}
and define the curve $\eta\in C([0,\infty), W^{1,p}_{{\rm rad}}(\R^{N}))$ by setting
$\eta(t):=z(\cdot/t)$ for $t\in (0,\infty)$ and $\eta(0):=0$. From
\eqref{j0} and \eqref{V0} it follows that
$$
f_{\lambda}(\eta(t))\leq \beta t^{N-p}\|Dz\|_{L^p(\R^N)}^{p}-t^{N}\intr\Big(\delta_{0}G(z)-\frac{M}p z^{p}\Big),
$$
yielding, on account of \eqref{perturbposit}, a time $t_0>0$ such that $f_{\lambda}(\eta(t_0))<0$ for every $\lambda\in[\delta_{0},1]$.
Then, the curve $\gamma\in C([0,1], W^{1,p}_{{\rm rad}}(\R^{N}))$, independent of $\lambda$, 
defined by $\gamma(t):=\eta(t_0t)$ has the required property and $\Gamma$ is nonempty by taking $w:=\gamma(1)$.
\end{proof}

\begin{lemma}\label{geo2}
Assume \eqref{j0}, \eqref{V0} and \eqref{ipog}. Let $\delta_0>0$ be the number found in Lemma~\ref{geo1}.
There exist $\sigma>0$ and $\rho>0$, independent of $\lambda$, such that $f_{\lambda}(u)\geq \sigma$
for any $u$ in $W^{1,p}_{{\rm rad}}(\R^{N})$ with $\|u\|_{1,p}=\rho$ and for every $\lambda\in[\delta_{0},1]$.
\end{lemma}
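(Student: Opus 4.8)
The plan is to bound $f_{\lambda}$ from below on a small sphere $\{\|u\|_{1,p}=\rho\}$ uniformly in $\lambda\in[\delta_0,1]$, by absorbing the term $\lambda\intr G(u)$ into the coercive contributions coming from $j$ and $V$, using the decay of $g$ at $0$ and at $+\infty$ contained in \eqref{ipog} together with the Sobolev inequality. The key is to choose all thresholds using only $\lambda\le 1$ and the sign of $G$, so that the resulting estimate does not see $\lambda$.

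First I would extract from \eqref{ipog} the pointwise bound: for every $\eps>0$ there is $C_\eps>0$ with $0\le g(s)\le \eps\, s^{p-1}+C_\eps\, s^{p^{*}-1}$ for all $s\ge 0$; integrating, and recalling that $G$ is nonnegative and vanishes on $\R^{-}$, one gets $0\le G(s)\le \frac{\eps}{p}|s|^{p}+\frac{C_\eps}{p^{*}}|s|^{p^{*}}$ for every $s\in\R$, hence for any $u\in\wsobr$
\[
0\le \intr G(u)\le \frac{\eps}{p}\|u\|_{\elle p}^{p}+\frac{C_\eps}{p^{*}}\|u\|_{\elle{p^{*}}}^{p^{*}}.
\]
On the other hand, \eqref{j0} gives $\intr j(u,|Du|)\ge \alpha\|Du\|_{\elle p}^{p}$ and \eqref{V0} gives $\intr V(|x|)\tfrac{|u|^{p}}{p}\ge \tfrac{m}{p}\|u\|_{\elle p}^{p}$. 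Since $\lambda\le 1$ and $G\ge 0$, combining these with $\|u\|_{\elle p}^{p}\le\|u\|_{1,p}^{p}$ yields, with $c_{0}:=\min\{\alpha,m/p\}>0$ depending only on $\alpha,m,p$,
\[
f_{\lambda}(u)\ \ge\ c_{0}\,\|u\|_{1,p}^{p}-\frac{\eps}{p}\|u\|_{1,p}^{p}-\frac{C_\eps}{p^{*}}\|u\|_{\elle{p^{*}}}^{p^{*}}.
\]

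Next I would invoke the Sobolev embedding $\wsob\hookrightarrow\elle{p^{*}}$, valid a fortiori on the radial subspace, so that $\|u\|_{\elle{p^{*}}}^{p^{*}}\le c_{\mathcal S}\|u\|_{1,p}^{p^{*}}$ for a constant $c_{\mathcal S}$ depending only on $N,p$ (and on the constant $\mathcal S$ of \eqref{V1}); choosing $\eps$ so small that $c_{0}-\eps/p\ge c_{0}/2$ then gives
\[
f_{\lambda}(u)\ \ge\ \frac{c_{0}}{2}\|u\|_{1,p}^{p}-c_{1}\|u\|_{1,p}^{p^{*}}=\|u\|_{1,p}^{p}\Big(\frac{c_{0}}{2}-c_{1}\|u\|_{1,p}^{\,p^{*}-p}\Big),
\]
with $c_{1}:=\tfrac{C_\eps}{p^{*}}c_{\mathcal S}$ independent of $\lambda$. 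Since $p^{*}>p$, one can fix $\rho>0$ so small that $\tfrac{c_{0}}{2}-c_{1}\rho^{\,p^{*}-p}\ge \tfrac{c_{0}}{4}$, and then $f_{\lambda}(u)\ge \tfrac{c_{0}}{4}\rho^{p}=:\sigma>0$ for every $u$ with $\|u\|_{1,p}=\rho$ and every $\lambda\in[\delta_{0},1]$, which is the claim.

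There is no genuine obstacle here; the lemma is routine, and the only point deserving care is the bookkeeping: one must check that $c_{0}$, $c_{1}$, the admissible $\eps$, and hence $\rho$ and $\sigma$, are all chosen using only $\lambda\le 1$ and $G\ge 0$, so that the lower bound near the origin is truly \emph{uniform} on $[\delta_{0},1]$ — which, together with Lemma~\ref{geo1}, is exactly the uniform mountain-pass geometry required to apply the monotonicity trick of \cite{montrick}.
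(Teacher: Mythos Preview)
Your proof is correct and follows essentially the same route as the paper: both extract from \eqref{ipog} the growth bound $|g(s)|\le \eps s^{p-1}+C_\eps s^{p^*-1}$, combine it with the lower bounds from \eqref{j0}, \eqref{V0} and the Sobolev inequality, and use $p^*>p$ to close the estimate on a small sphere. Your write-up is somewhat more explicit about the uniformity in $\lambda$ (via $\lambda\le 1$ and $G\ge 0$), but the argument is the same.
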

\begin{proof}
Condition \eqref{ipog} implies that for every $\eps>0$, there exists $C_{\eps}$ such that
\beq\label{crescita}
|g(s)|\leq \eps s^{p-1}+C_{\eps} s^{p^{*}-1},\qquad\text{for every $s\in \R^+$}.
\eeq
Then, fixed $\eps_{0}<m$, we find $C_{\eps_{0}}$ such that for every
$\lambda\in [\delta_{0},1]$ 
$$
f_{\lambda}(u)\geq \alpha \|Du\|_{L^p(\R^N)}^{p}+\frac{m-\eps_{0}}{p}\|u\|_{L^p(\R^N)}^{p}-C_{\eps_{0}}\|u\|_{W^{1,p}(\R^N)}^{p^{*}}.
$$
This last inequality immediately gives the conclusion.
\end{proof}

We will use the following compactness condition.

\begin{definition}
Let $\lambda,c\in \R$. We say that $f_{\lambda}$ 
satisfies the concrete-$(BPS)_{c}$ condition if any bounded sequence 
$(u_{h})\subset W^{1,p}_{{\rm rad}}(\R^{N})$ such that there is
$w_{h}\in W^{-1,p'}_{{\rm rad}}(\R^{N})$ with
\beq\label{bps}
f_{\lambda}(u_{h})\to c,\quad \langle f'_{\lambda}(u_{h}),v \rangle 
=\langle w_{h},v\rangle\quad\text{for every $v\in C^{\infty}_{c, {\rm rad}}(\R^{N})$}, \quad \text{and $w_{h}\to 0$} 
\eeq
admits a strongly convergent subsequence.
\end{definition}

\noindent
In the next result we will use the property 
\begin{equation}\label{jxicoerc}
j_t(s,t) t \geq \alpha t^p,
\end{equation}
which can be obtained by hypotheses $\refe{j0}$ 
once one has observed that, as $j$ is a strict convex function with respect to $t$, it results $0=j(s,0)\geq j(s,t)+j_t(s,t)\cdot(0-t).$

\begin{proposition}\label{comp}
Let $\lambda\in [\delta_{0},1]$, $c\in \R$ and assume 
\eqref{j0}-\eqref{V0} and \eqref{ipog}. Then the functional $f_{\lambda}$ satisfies the concrete-$(BPS)_{c}$.
\end{proposition}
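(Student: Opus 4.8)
The plan is to run a compactness argument of Leray--Lions/Boccardo--Murat type, adapted to the radial setting; the growth hypothesis \eqref{jder} on $j_s$, the coercivity \eqref{jxicoerc}, the strict convexity of $t\mapsto j(s,t)$ and the sign condition \eqref{segno} are precisely what is needed to push it through. So let $(u_h)\subset\wsobr$ be bounded with $f_\lambda(u_h)\to c$ and $\langle f'_\lambda(u_h),v\rangle=\langle w_h,v\rangle$ for all $v\in C^\infty_{c,{\rm rad}}(\R^N)$, with $w_h\to0$ in $W^{-1,p'}_{{\rm rad}}(\R^N)$. First I would note that, by \eqref{jder}, $v\mapsto\langle f'_\lambda(u_h),v\rangle$ extends to a continuous functional on $\wsobr\cap L^\infty(\R^N)$, so the identity above persists, by density, for every bounded $v\in\wsobr$; moreover, since $u_h$ is a critical point in the sense of \cite{dm,cdm} (cf.\ \cite[Theorem 3]{pelUMI} and \cite{AB,candeg}), one has $j_s(u_h,|Du_h|)u_h\in L^1(\R^N)$ with a bound uniform in $h$. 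Extracting a subsequence, $u_h\rightharpoonup u$ in $\wsobr$; the compact Strauss embedding of $\wsobr$ into $\elle{q}$ for $p<q<p^*$ (see \cite[Theorem A.VI]{berlions}) yields $u_h\to u$ in $\elle{q}$ and, after a further subsequence, $u_h\to u$ a.e.\ in $\R^N$, while Rellich's theorem gives $u_h\to u$ in $L^p(B_R)$ for every $R>0$.

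The heart of the matter is to prove $Du_h\to Du$ a.e.\ in $\R^N$. Here I would fix $R>0$ and a cut-off $\varphi\in C^\infty_c(\R^N)$ with $0\le\varphi\le1$, $\varphi\equiv1$ on $B_R$ and $\mathrm{supp}\,\varphi\subset B_{2R}$, and for each $k>0$ test the equation with $v_h:=\varphi\,\psi\big(T_k(u_h-u)\big)$, where $T_k$ is the truncation at height $k$ and $\psi(s):=s\,e^{\gamma s^2}$ with $\gamma:=\beta^2/(4\alpha^2)$, chosen so that $\alpha\psi'(s)-\beta|\psi(s)|\ge\alpha/2$ for every $s\in\R$. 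Since $v_h$ is bounded, radial, compactly supported and $\|v_h\|_{1,p}$ is bounded in $h$, the remainder $\langle w_h,v_h\rangle$ is $o(1)$; the zero order contributions $\intr V(|x|)|u_h|^{p-2}u_h\,v_h$ and $\lambda\intr g(u_h)v_h$ are $o(1)$ because $V(|x|)|u_h|^{p-2}u_h$ is bounded in $\elle{p'}$, $g(u_h)$ is bounded in $\elle{p'}+\elle{(p^*)'}$ by \eqref{crescita}, and $v_h\to0$ in $L^p(B_{2R})\cap L^{p^*}(B_{2R})$. Using $|j_s(u_h,|Du_h|)|\le\beta|Du_h|^p$, the coercivity \eqref{jxicoerc} and the choice of $\gamma$, the ``natural growth'' term $\intr j_s(u_h,|Du_h|)v_h$ is absorbed, up to $o(1)$, into the diagonal part $\intr\varphi\,\psi'\big(T_k(u_h-u)\big)\,j_t(u_h,|Du_h|)|Du_h|$ of the principal term, which leaves, for each fixed $k$,
\[
\lim_{h\to\infty}\int_{B_R}\Big[j_t(u_h,|Du_h|)\tfrac{Du_h}{|Du_h|}-j_t(u_h,|Du|)\tfrac{Du}{|Du|}\Big]\cdot(Du_h-Du)\,\chi_{\{|u_h-u|\le k\}}\,dx=0 .
\]
Since $t\mapsto j(s,t)$ is strictly convex the integrand is nonnegative, so a standard measure-theoretic argument (letting $k\to\infty$ and using $u_h\to u$ a.e., as in \cite{AB,candeg}) forces $Du_h\to Du$ a.e.\ in $B_R$, hence, along a diagonal subsequence, a.e.\ in $\R^N$; revisiting the same computation also gives $Du_h\to Du$ strongly in $L^p(B_R;\R^N)$ for every $R>0$.

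It then remains to exclude loss of mass at infinity, and for this I would exploit the radial decay estimate $|u_h(x)|\le C\,|x|^{-(N-p)/p}\|u_h\|_{1,p}$. Testing the equation with $(1-\varphi)u_h$ — admissible thanks to the $L^1$ bound on $j_s(u_h,|Du_h|)u_h$ and monotone convergence — and dropping the nonnegative term $\intr j_s(u_h,|Du_h|)u_h(1-\varphi)\ge0$, which is where \eqref{segno} enters, the coercivity \eqref{jxicoerc} and $V(|x|)\ge m$ make the left-hand side dominate $\alpha\int_{|x|\ge2R}|Du_h|^p+m\int_{|x|\ge2R}|u_h|^p$. On the right-hand side, \eqref{crescita} together with the radial decay give $\int_{|x|\ge R}g(u_h)u_h\le\eps\,\|u_h\|_{1,p}^p+o_R(1)$ uniformly in $h$ (with $o_R(1)\to0$ as $R\to\infty$); the boundary term produced by $D\varphi$ is bounded by $C\big(\int_{B_{2R}\setminus B_R}|Du_h|^p\big)^{(p-1)/p}$, which converges to $C\big(\int_{|x|\ge R}|Du|^p\big)^{(p-1)/p}$ as $h\to\infty$ by the strong $L^p_{\mathrm{loc}}$ convergence of the gradients just established; and $\langle w_h,(1-\varphi)u_h\rangle=o(1)$. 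Letting first $h\to\infty$ and then $\eps\to0$, $R\to\infty$ shows that $(|Du_h|^p)$ and $(|u_h|^p)$ are tight, and combined with $Du_h\to Du$ and $u_h\to u$ in $L^p_{\mathrm{loc}}$ this yields $u_h\to u$ strongly in $W^{1,p}(\R^N)$, which is the assertion. I expect the main obstacle to be the middle step: because $j_s(u_h,|Du_h|)$ is merely bounded in $L^1$, one cannot test with the naive choice $\varphi(u_h-u)$, and it is exactly the exponential weight $\psi$, balanced against the coercivity \eqref{jxicoerc}, together with the truncation that localizes it, that renders the natural-growth lower order term harmless.
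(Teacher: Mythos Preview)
Your argument is essentially correct, but it follows a noticeably different path from the paper's. You run a hands-on Boccardo--Murat computation with the exponential test function $\psi(s)=se^{\gamma s^2}$ to obtain a.e.\ gradient convergence, and then rule out escape of mass by a cut-off/tightness argument based on the radial decay $|u_h(x)|\le C|x|^{-(N-p)/p}$ and the sign condition \eqref{segno}. The paper instead outsources the a.e.\ convergence of gradients to \cite{murboc} (in the radial variant of \cite{sqradII}), passes to the limit in the equation so that $u$ itself is a solution, and then---rather than proving tightness---tests the limit equation with $u$, tests the approximate equation with $u_h$, invokes \cite[Theorem~A.I]{berlions} under \eqref{ipog} to get $\int g(u_h)u_h\to\int g(u)u$, and sandwiches via Fatou to obtain $\int j_t(u_h,|Du_h|)|Du_h|+V|u_h|^p\to\int j_t(u,|Du|)|Du|+V|u|^p$; strong convergence then drops out of uniform convexity of $W^{1,p}$. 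The paper's route is shorter and avoids the whole tail analysis, at the price of relying on the Berestycki--Lions compactness lemma for $\int g(u_h)u_h$; your route is more self-contained and makes explicit where each structural hypothesis (\eqref{segno}, \eqref{jxicoerc}, radial decay) is spent. Two small points to tighten: (i) $u_h$ is not a ``critical point'' but a Palais--Smale element---the uniform $L^1$ bound on $j_s(u_h,|Du_h|)u_h$ you need comes from testing with $T_k(u_h)$ and letting $k\to\infty$ using \eqref{segno}; (ii) you do not actually need strong $L^p_{\rm loc}$ convergence of the gradients to control the boundary term produced by $D\varphi$: it is already bounded by $C\|u_h\|_{L^p(B_{2R}\setminus B_R)}\|Du_h\|_{L^p}^{p-1}$, and Rellich on the first factor suffices.
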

\begin{proof}
Let $(u_h)\subset W^{1,p}_{{\rm rad}}(\R^{N})$ be a bounded sequence which satisfies the properties in \eqref{bps}.
Then, in turn, there exists a subsequence,  still denoted by $(u_h)$, 
converging  weakly in $\wsobr$, strongly in $L^q(\R^N)$ for any $q\in (p,p^*)$ and almost everywhere to a function $u\in\wsobr$. 
Moreover, we can apply the result in \cite{murboc} to obtain that $Du_{h}$ converges to $Du$ almost everywhere.
More precisely, since the variational formulation is here restricted to radial functions, this property follows
by arguing as in \cite[proof of Theorem 6.4]{sqradII}. Then, it is possible to follow 
the same arguments used in \cite[Step 2 of Lemma 2]{pelUMI} (see also \cite{squTO}) 
for bounded domains, in order to pass to the limit in the equation in \eqref{bps} and obtain in turn 
that $u$ satisfies the variational identity
\begin{align*}
\intr j_t(u,|Du|)\frac{Du}{|Du|}\cdot D\varphi &+\intr j_s(u,|Du|)\varphi  \\
 + \intr V(|x|)|u|^{p-2}uv &=\lambda\intr g(u)\varphi,
\quad\,\,
\forall \varphi\in C^{\infty}_{c, {\rm rad}}(\R^{N}).
\end{align*}
In fact, all the particular test functions built in \cite{pelUMI,squTO} to achieve this identity
are radial, since each $u_h$ is radial and $\varphi$ is a fixed radial function. 
Observe also that a function $\varphi\in W^{1,p}_{{\rm rad}}(\R^{N})\cap L^\infty(\R^N)$
can be approximated, in the $\|\cdot\|_{1,p}$ norm, by a sequence $(\varphi_m)\subset C^{\infty}_{c, {\rm rad}}(\R^{N})$
with  $\|\varphi_m\|_{L^\infty}\leq c(\varphi)$, for some positive constant $c(\varphi)$.
Whence, exploiting \eqref{jder}-\eqref{V0} and 
\eqref{ipog}, recalling that $u$ is radial and arguing as in \cite[Proposition 1]{pelUMI}, 
it follows that $u$ is an admissible test function, namely
\begin{equation}
	\label{identu}
\intr j_t(u,|Du|)|Du|+\intr j_s(u,|Du|)u+ \intr V(|x|)|u|^p=\lambda\intr g(u)u.
\end{equation}
Furthermore, taking into account that $u_{h}\in\wsobr$ and
exploiting conditions \eqref{ipog},  we can use 
\cite[Theorem A.I]{berlions} to obtain that 
$$
\lim_{h\to\infty}\intr g(u_h)u_h=\intr g(u)u.
$$
Observe that, applying by Fatou's lemma in view of \eqref{segno}-\eqref{V0} and \eqref{jxicoerc}, 
formula \eqref{identu} implies
\begin{align*}
\intr j_t(u,|Du|) |Du|+V(|x|)|u|^p
&\leq
\liminf_{h\to\infty}\Big\{\intr j_t(u_h,|Du_h|)|Du_h|
+V(|x|)|u_h|^p\Big\}
\\
&\leq
\limsup_{h\to\infty}\Big\{\intr j_t(u_h,|Du_h|)|Du_h|
+V(|x|)|u_h|^p\Big\}
\\
&\leq -\liminf_{h\to\infty}
\intr j_s(u_h,|Du_h|)u_h+\lim_{h\to\infty}\lambda\intr g(u_h)u_h
\\
&=-\intr j_s(u,|Du|)u+\lambda\intr g(u)u
\\
&=
\intr j_t(u,|Du|)|Du|+V(|x|)|u|^p.
\end{align*}
Then, taking into account \eqref{V0} and \eqref{jxicoerc}, it results
$$
\lim_{h\to\infty} \intr |Du_h|^p+m|u_h|^p =\intr |Du|^p+m|u|^p,
$$
giving the desired convergence of $(u_{h})$ to $u$
via the uniform convexity of $W^{1,p}(\R^N)$.
\end{proof}

\noindent
Next, we state the main technical tool for the proof of the first theorem.

\begin{lemma}\label{montrick}
Assume that conditions \eqref{j0}-\eqref{V0} and \eqref{ipog}-\eqref{Gpos} hold
and that $f_\lambda$ satisfies the concrete-$(BPS)_{c}$ for all $c\in\R$ and all $\lambda\in [\delta_0,1]$. Then there exists
a sequence $(\lambda_j,u_j)\subset [\delta_0,1]\times \wsobr$ 
with $\lambda_j\nearrow 1$ and where $u_j$ is
a distributional solution to 
\begin{equation}\label{eqradla}
-{\rm div}\Big[j_{t}(u,|Du|)\frac{Du}{|Du|}\Big]+j_{s}(u,|Du|)+V(|x|) |u|^{p-2}u=\lambda_j g(u)\quad\,\,\, \text{in $\R^{N}$},
\end{equation}
such that $f_{\lambda_j}(u_j)=c_{\lambda_j}$.
\end{lemma}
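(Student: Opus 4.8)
The plan is to apply the non-smooth version of the monotonicity trick of \cite{montrick} to the family $(f_\lambda)_{\lambda\in[\delta_0,1]}$ on the Banach space $X:=\wsobr$, written in the structural form $f_\lambda=\Phi-\lambda\Psi$ with
$$
\Phi(u)=\intr j(u,|Du|)+\intr V(|x|)\frac{|u|^p}{p},\qquad \Psi(u)=\intr G(u).
$$
First I would verify the abstract hypotheses. By \eqref{j0} and \eqref{V0} one has $\Phi(u)\geq \alpha\|Du\|_{L^p(\R^N)}^p+\tfrac{m}{p}\|u\|_{L^p(\R^N)}^p$, so $\Phi$ is coercive on $X$; since $g\geq0$ on $\R^+$ and $g\equiv0$ on $\R^-$, we have $G\geq0$ and hence $\Psi\geq0$ (so that $\lambda\mapsto c_\lambda$ is non-increasing); both $\Phi$ and $\Psi$ are continuous on $X$ by \eqref{j0}, \eqref{V0}, \eqref{ipog} and \cite[Theorem A.VI]{berlions}. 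The uniform Mountain Pass geometry --- the existence, all independent of $\lambda$, of $w=\gamma(1)$ with $\|w\|_{1,p}>\rho$ and of $\rho,\sigma>0$, such that $f_\lambda(0)=0$, $f_\lambda(w)<0$ and $f_\lambda(u)\geq\sigma$ whenever $\|u\|_{1,p}=\rho$ --- is precisely the combination of Lemmas~\ref{geo1} and~\ref{geo2}; it yields $0<\sigma\leq c_\lambda\leq\max_{t\in[0,1]}f_1(\gamma(t))<\infty$ and in particular $c_\lambda>\max\{f_\lambda(0),f_\lambda(w)\}$ for every $\lambda\in[\delta_0,1]$.

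With these facts at hand, the abstract theorem of \cite{montrick} provides a set $\Lambda\subset[\delta_0,1]$ of full Lebesgue measure such that, for every $\lambda\in\Lambda$, the functional $f_\lambda$ admits a \emph{bounded} Palais-Smale sequence $(u_h^\lambda)\subset X$ at the level $c_\lambda$ in the weak-slope sense, namely $f_\lambda(u_h^\lambda)\to c_\lambda$ and $|df_\lambda|(u_h^\lambda)\to 0$. Using the description of the weak slope for functionals of this type restricted to radial functions, as in \cite[Theorem 3]{pelUMI} and \cite[Proposition 6.16]{sqradII}, such a bounded sequence is in fact a concrete-$(BPS)_{c_\lambda}$ sequence in the sense defined above: one can represent $f'_\lambda(u_h^\lambda)$ on radial test functions by elements $w_h\in W^{-1,p'}_{{\rm rad}}(\R^{N})$ with $w_h\to0$. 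Since, by hypothesis, $f_\lambda$ satisfies the concrete-$(BPS)_{c_\lambda}$ condition, a subsequence of $(u_h^\lambda)$ converges strongly in $X$ to some $u_\lambda$, for which $f_\lambda(u_\lambda)=c_\lambda$ and $|df_\lambda|(u_\lambda)=0$; by the generalized Palais symmetric criticality principle of \cite{sqradII}, together with the identification of critical points with distributional solutions of the Euler equation in the sense of \cite{dm,cdm,pelUMI}, $u_\lambda$ is a distributional solution of \eqref{formulaz-eq-lambda}, that is of \eqref{eqradla} with $\lambda$ in place of $\lambda_j$. Finally, since $\Lambda$ has full measure it accumulates at $1$ from the left, so we may pick $\lambda_j\in\Lambda$ with $\lambda_j\nearrow1$ and set $u_j:=u_{\lambda_j}$.

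The main obstacle, in my view, is the bridge between the abstract conclusion of \cite{montrick} --- a bounded sequence along which the weak slope of $f_\lambda$ vanishes --- and the concrete-$(BPS)_{c_\lambda}$ notion, since it is only the latter to which the assumed compactness applies. This requires the weak-slope representation formulas for $f_\lambda$ restricted to radial functions (as in \cite{sqradII,pelUMI}), together with the fact that $f_\lambda$, though merely continuous and not locally Lipschitz, is Gateaux differentiable along bounded radial directions, which is enough to produce the representatives $w_h\in W^{-1,p'}_{{\rm rad}}(\R^{N})$. Everything else --- the coercivity and continuity checks, the verification of the uniform Mountain Pass geometry via Lemmas~\ref{geo1}--\ref{geo2}, and the measure-theoretic extraction of $\lambda_j\nearrow1$ --- is routine once \cite{montrick} is used as a black box.
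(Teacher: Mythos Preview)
Your proposal is correct and follows essentially the same route as the paper. The paper's proof is terser: it invokes \cite[Corollary~3.3]{montrick} directly with the choices $X=S=V=\wsobr$ and the \emph{trivial} symmetrization maps $u^H:=u$, $u^*:=u$, verifies the abstract hypotheses $(\mathcal H_1)$--$(\mathcal H_4)$ via Lemmas~\ref{geo1}--\ref{geo2} and the structure of $f_\lambda$, and then appeals to \cite[Theorem~4.1 and the end of the proof of Theorem~6.4]{sqradII} to upgrade from radial test functions to arbitrary ones; you unpack the same machinery (full-measure set $\Lambda$, bounded Palais--Smale sequences in the weak-slope sense, passage to concrete-$(BPS)$, symmetric criticality) rather than citing the packaged corollary, but the content is the same.
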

\begin{proof}
The result follows by applying \cite[Corollary 3.3]{montrick} to the minimax class defined in \eqref{minmaxclass}, with the choice of spaces
$X=S=V=\wsobr$ and by defining $u^H:=u$ and $u^{*}:=u$ as the identity maps. In fact, assumptions $({\mathcal H}_1)$ and $({\mathcal H}_2)$
are fulfilled thanks to Lemmas \ref{geo1} and \ref{geo2}.
Condition $({\mathcal H}_3)$ is implied by the structure of $f_{\lambda}$ as it can be verified by a straightforward 
direct computation. Finally assumption $({\mathcal H}_4)$ is evidently satisfied since
$u^H$ is the identity map. 
Since $X=\wsobr$, it turns out that, a priori, the solutions $(u_j)$ provided by \cite[Corollary 3.3]{montrick} are distributional with respect to test functions in 
$C^{\infty}_{c, {\rm rad}}(\R^{N})$. The fact that $u_j$ is, actually, a distributional solution with respect to any test function in 
$C^{\infty}_{c}(\R^{N})$ follows by \cite[Theorem 4.1 and end of the proof of Theorem 6.4]{sqradII}.
\end{proof}

\begin{proposition}
\label{continleft}
Assume \eqref{j0}, \eqref{V0} and \eqref{ipog}-\eqref{Gpos}.
The map $\lambda\to c_\lambda$ is non-increasing and continuous from the left.
\end{proposition}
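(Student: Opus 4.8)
The plan is to establish the two assertions separately. Monotonicity is the easy part: if $\delta_0 \le \lambda \le \mu \le 1$, then for every $u \in \wsobr$ one has $f_\mu(u) = f_\lambda(u) - (\mu-\lambda)\intr G(u) \le f_\lambda(u)$, since $G \ge 0$ (as $g \ge 0$ with $g(0)=0$). Taking the sup over a fixed path $\gamma \in \Gamma$ and then the inf over $\Gamma$ (which is nonempty and independent of $\lambda$ by Lemma~\ref{geo1}) yields $c_\mu \le c_\lambda$; hence $\lambda \mapsto c_\lambda$ is non-increasing and in particular admits left and right limits at every point.

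For the left-continuity at a fixed $\lambda_0 \in (\delta_0,1]$, I would argue as follows. By monotonicity, $c_{\lambda_0} \le \lim_{\lambda \nearrow \lambda_0} c_\lambda =: \ell$, so it suffices to prove $\ell \le c_{\lambda_0}$. Fix $\eps > 0$ and choose $\gamma \in \Gamma$ with $\sup_{t\in[0,1]} f_{\lambda_0}(\gamma(t)) \le c_{\lambda_0} + \eps$. The key point is a uniform bound on this path: set $K := \sup_{t\in[0,1]} \|\gamma(t)\|_{1,p} < \infty$ (finite since $\gamma$ is continuous on the compact interval $[0,1]$), and note that by \eqref{ipog} there is a constant $C_K$, depending only on $K$, with $0 \le \intr G(\gamma(t)) \le C_K$ for all $t$; indeed \eqref{crescita} gives $G(s) \le \tfrac{\eps_0}{p} s^p + \tfrac{C_{\eps_0}}{p^*} s^{p^*}$, and the right-hand side is controlled via the Sobolev embedding $\wsobr \hookrightarrow L^{p^*}(\R^N)$ and the $L^p$-norm along the path. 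Then for $\lambda < \lambda_0$,
\[
\sup_{t\in[0,1]} f_\lambda(\gamma(t)) = \sup_{t\in[0,1]}\Big( f_{\lambda_0}(\gamma(t)) + (\lambda_0-\lambda)\intr G(\gamma(t)) \Big) \le c_{\lambda_0} + \eps + (\lambda_0-\lambda) C_K,
\]
so $c_\lambda \le c_{\lambda_0} + \eps + (\lambda_0-\lambda) C_K$. Letting $\lambda \nearrow \lambda_0$ gives $\ell \le c_{\lambda_0} + \eps$, and since $\eps > 0$ is arbitrary, $\ell \le c_{\lambda_0}$, which completes the proof.

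The main (and only mildly delicate) obstacle is the uniform integral bound $\intr G(\gamma(t)) \le C_K$ along the optimizing path, which is what makes the perturbation term $(\lambda_0-\lambda)\intr G(\gamma(t))$ vanish uniformly in $t$ as $\lambda \nearrow \lambda_0$; once one fixes a single near-optimal path $\gamma$ this bound is immediate from its compactness and the subcritical growth of $G$ encoded in \eqref{ipog}. I note that one should \emph{not} expect right-continuity by this kind of soft argument: the inequality goes the wrong way when $\lambda > \lambda_0$ because then one would need a uniform $G$-bound along paths that are near-optimal for the \emph{varying} functional $f_\lambda$, and no single path can be fixed in advance — this is precisely why the statement only claims continuity from the left.
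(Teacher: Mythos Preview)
Your proof is correct and follows essentially the same approach as the paper, which for the left-continuity simply refers to \cite[Lemma~2.3]{Jj} and phrases the argument by contradiction. The underlying idea is identical to yours: fix a near-optimal path for $c_{\lambda_0}$, use its compactness together with the subcritical growth of $G$ to bound $\intr G(\gamma(t))$ uniformly in $t$, and conclude; the only difference is that you present this as a direct $\eps$-argument rather than by contradiction, which is a purely cosmetic distinction.
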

\begin{proof}
	The fact that $c_\lambda$ is non-increasing trivially follows from the fact that $G\geq 0$.
	The proof of the left-continuity follows arguing by contradiction exactly as done in \cite[Lemma 2.3]{Jj}.
\end{proof}

\subsection{Proof of Theorem \ref{rad} concluded.}
Proposition \ref{comp} allows us to apply Lemma \ref{montrick} and 
obtain, in turn, a sequence $u_{j}$ of distributional solution of \eqref{eqradla} at the energy level 
$c_{\lambda_{j}}$. Following the argument in \cite[Lemma 4.1]{giasqu}
and applying \cite[Theorem 1 and Remark p.261]{serrin} one obtains
$u_{j}\in L^{\infty}_{{\rm loc}}(\R^{N})$ and then, via standard regularity arguments (see \cite{ladura}) $u_{j}\in C^{1,\alpha}(\R^N)$.
As a consequence, we can apply the Poh\v ozaev variational identity for $C^1$ solutions of equation~\eqref{eqradla}
stated in \cite[Lemma 1]{dms}, by choosing therein $h(x)=h_k(x)=H(x/k)x\in C^1_c(\R^N;\R^N)$, where 
$H\in C^1_c(\R^N)$ is such that $H(x)=1$ on $|x|\leq 1$ and $H(x)=0$ for $|x|\geq 2$. Letting $k\to\infty$
and taking into account conditions \eqref{j0}, \eqref{jder} and that $V'(|x|)|x|\in L^{N/p}(\R^N)$, we reach
\begin{align*}
& \intr j_{t}(u_{j},|Du_{j}|)|Du_{j}|-N\intr j(u_{j},|Du_{j}|)-\frac{N}{p}\intr V(|x|)|u_j|^p	\\
& +N\lambda_j\intr G(u_j)-\frac{1}{p}\intr V'(|x|)|x| |u_{j}|^{p}=0,\quad\text{for all $j\geq 1$}.
\end{align*}
In turn, each $u_{j}$ satisfies the following identity
\begin{equation*}
f_{\lambda}(u_{j})=
\frac1N\intr j_{t}(u_{j},|Du_{j}|)|Du_{j}|-\frac1{Np}\intr
 V'(|x|)|x| |u_{j}|^{p},\quad\text{for all $j\geq 1$}.
\end{equation*}
Since  $f_{\lambda}(u_{j})=c_{\lambda_{j}}$ and recalling \eqref{jxicoerc} one has
$$
\|Du_{j}\|_{L^{p}(\R^N)}^{p}\big(\alpha p {\mathcal S}
-\|V'(|x|)|x|\|_{L^{N/p}(\R^N)}\big)\leq pN {\mathcal S}c_{\lambda_j},
\,\,\quad\text{for all $j\geq 1$},
$$
where ${\mathcal S}$ is the best constant for the Sobolev embedding.
The last inequality, jointly with \eqref{V1} and Proposition \ref{continleft}, yields the 
existence of $A>0$ such that 
\beq\label{stima1}
\|Du_{j}\|_{L^{p}(\R^N)}\leq A,\quad\text{for all $j\geq 1$}.
\eeq 
Also, since $u_{j}$ solves \eqref{eqradla}, by testing it
with $u_j$ itself (which is admissible), \eqref{segno} and \eqref{jxicoerc} give
$$
\intr V(|x|)|u_{j}|^{p}-\lambda_{j}\intr g(u_{j})u_{j}\leq 0.
$$
So that, conditions \eqref{V0}, \eqref{crescita} and \eqref{stima1} yield, for any fixed $\eps<m$,
\beq\label{stima2}
(m-\lambda_{j}\eps)\|u_{j}\|_{L^{p}(\R^N)}^{p}\leq \lambda_{j}\frac{C_{\eps}}{\mathcal S^{p^{*}/p}}A^{p^{*}}.
\eeq
Since $(\lambda_{j})$ is bounded, by combining \eqref{stima1} and \eqref{stima2} we get that 
$(u_j)$ is bounded in $\wsobr$. In turn, let us observe that $(u_j)$ is a concrete-$(BPS)_{c_1}$ for the
functional $f_1$. In fact notice that, taking into account
that $G(u_j)$ remains bounded in $L^1(\R^N)$ due to inequality \eqref{crescita}, that $f_{\lambda_j}(u_j)=c_{\lambda_j}$ and recalling
Proposition~\ref{continleft}, it follows as $j\to\infty$
\begin{equation}
	\label{energia}
f_1(u_j)=f_{\lambda_j}(u_j)+(\lambda_j-1)\intr G(u_j)=c_{\lambda_j}+(\lambda_j-1)\intr G(u_j)=c_1+o(1).
\end{equation}
Furthermore, by defining $\hat w_j=(\lambda_j-1) g(u_j)\in W^{-1,p'}(\R^{N})$,
for every $v\in C^{\infty}_c(\R^{N})$ we have 
\begin{align}
	\label{equazione}
\langle f'_1(u_j),v\rangle & =\intr j_{t}(u_j,|Du_j|)\frac{Du_j}{|Du_j|}\cdot Dv
+\intr j_{s}(u_j,|Du_j|)v \\
& +\intr V(|x|) |u_j|^{p-2}u_jv-\intr g(u_j)v=\langle f'_{\lambda_j}(u_j),v\rangle+ \langle \hat w_j,v\rangle=\langle \hat w_j,v\rangle.     \notag
\end{align}
Then, since in light of \eqref{crescita} and \eqref{stima1}-\eqref{stima2}, $\hat w_j\to 0$ in $W^{-1,p'}(\R^{N})$ as $j\to\infty$, 
Proposition \ref{comp} applied to $f_1$ and with $c=c_1$ implies that there exists a function $u\in \wsobr$ such that, up to a subsequence, 
$(u_j)$ converges to $u$ strongly in $\wsobr$. On account of formulas \eqref{energia}-\eqref{equazione} and the continuity of $f_1$,
and by an application of Lebesgue's Theorem we conclude that $u$ is a nontrivial radial Mountain Pass
solution of \eqref{equarad}. Finally, $u$ is automatically nonnegative, as follows by testing \eqref{equarad}
with the admissible (by \cite[Proposition 3.1]{squTO} holding also for unbounded domains) test function $-u^-$, 
in view of \eqref{segno}, \eqref{jxicoerc} and the fact that $g(s)=0$ for every $s\leq 0$.

\section{Proof of Theorem \ref{sym}}
\noindent
Equation~\eqref{equarad} is investigated by studying the continuous functional $f_\lambda: W^{1,p}(\R^{N})\to \R$ 
with $f_\lambda(u)$ again defined as in \eqref{frad}
which, for $\lambda=1$, corresponds to the action functional associated to~\eqref{equarad}. 

\begin{definition}
Let $\lambda\in [\delta_{0},1]$, for some $\delta_0>0$, and $c\in \R$. We say that $f_{\lambda}$ satisfies the concrete-$(SBPS)_{c}$ condition 
if every bounded sequence $(u_{h})$ in $W^{1,p}(\R^{N})$ such that there exists
$w_{h}\in W^{-1,p'}(\R^{N})$ with $w_{h}\to 0$ as $h\to\infty$,
$$
f_{\lambda}(u_{h})\to c,\quad \langle f'_{\lambda}(u_{h}),v \rangle 
=\langle w_{h},v\rangle\quad\forall v\in C^{\infty}_{c}(\R^{N}), 
$$
and
\begin{equation}
	\label{simmcondtiondef}
\|u_h-u_h^*\|_{L^p(\R^N)\cap L^{p^*}(\R^N)}\to 0, 
\end{equation}
admits a strongly convergent subsequence. Here $u^*:=|u|^*$, where $*$ denoted the Schwarz symmetrization.
\end{definition}

\begin{proposition}\label{comp2}
Let $\lambda\in [\delta_{0},1]$, for some $\delta_0>0$, $c\in \R$ and assume that 
\eqref{j0}-\eqref{V0} and \eqref{ipog2} hold. Then the functional $f_{\lambda}$ satisfies the concrete-$(SBPS)_{c}$.
\end{proposition}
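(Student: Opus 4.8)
The plan is to reproduce the scheme of the proof of Proposition~\ref{comp}, the only structural change being that the compactness of the embedding $\wsobr\hookrightarrow\elle{q}$, $p<q<p^*$, which is no longer available since we work in the whole space $\wsob$, is replaced by the information carried by the almost symmetry condition~\eqref{simmcondtiondef}. Let $(u_h)\subset\wsob$ be a bounded sequence with the stated properties. Passing to a subsequence, $u_h\rightharpoonup u$ in $\wsob$, $u_h\to u$ almost everywhere and in $L^q_{\rm loc}(\R^N)$ for every $p\le q<p^*$. First I would work with the Schwarz symmetrizations $u_h^*=|u_h|^*$: by the P\'olya--Szeg\H{o} inequality $\|Du_h^*\|_{L^p(\R^N)}\le\|Du_h\|_{L^p(\R^N)}$ and $\|u_h^*\|_{L^p(\R^N)}=\|u_h\|_{L^p(\R^N)}$, so $(u_h^*)$ is bounded in $\wsobr$ and, along a further subsequence, converges strongly in $\elle{q}$ for all $p<q<p^*$. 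Interpolating~\eqref{simmcondtiondef} between $L^p$ and $L^{p^*}$ gives $\|u_h-u_h^*\|_{\elle{q}}\to 0$ for every $p\le q\le p^*$; hence $(u_h)$ itself converges strongly in $\elle{q}$, $p<q<p^*$, necessarily to $u$, which is therefore radial and radially decreasing, being an $L^q$-limit of Schwarz symmetric functions.

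Next I would recover the almost everywhere convergence of the gradients. Since the variational setting is now the full $\wsob$, and since the lower order terms and the right-hand side in the equation satisfied by $u_h$ are controlled by~\eqref{jder}, \eqref{V0}, \eqref{ipog2} and the boundedness of $(u_h)$, the result of~\cite{murboc} applies directly and yields $Du_h\to Du$ almost everywhere. From here, exactly as in Proposition~\ref{comp} and following \cite[Step 2 of Lemma 2]{pelUMI} and \cite{squTO}, one passes to the limit in the equation to obtain that $u$ satisfies the variational identity against every $\varphi\in C^\infty_c(\R^N)$, and then, arguing as in \cite[Proposition 1]{pelUMI} and using \eqref{jder}-\eqref{V0}, \eqref{ipog2} and the radial decay of $u$, that $u$ is itself an admissible test function, so that \eqref{identu} holds with the present $u$ and $\lambda$.

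The step where the symmetry hypothesis really enters is the passage to the limit in the nonlinear term. In Proposition~\ref{comp} the identity $\intr g(u_h)u_h\to\intr g(u)u$ came from Strauss' compactness lemma \cite[Theorem A.I]{berlions}, which relies on the uniform pointwise decay of radial functions; here the $u_h$ need not be radial, but the strong convergence $u_h\to u$ in $\elle{q}$, $p<q<p^*$, obtained above is an adequate substitute. Indeed, by~\eqref{ipog2} one has $|g(u_h)u_h|\le\eps|u_h|^p+C_\eps|u_h|^q$, where $(|u_h|^p)$ is bounded in $L^1(\R^N)$ and $(|u_h|^q)$ is equi-integrable; combined with the almost everywhere convergence of $u_h$, Vitali's convergence theorem gives $\intr g(u_h)u_h\to\intr g(u)u$. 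From this point the argument is verbatim that of Proposition~\ref{comp}: inserting \eqref{segno}, \eqref{V0}, \eqref{jxicoerc} and Fatou's lemma into \eqref{identu} forces
\[
\lim_{h\to\infty}\intr|Du_h|^p+m|u_h|^p=\intr|Du|^p+m|u|^p ,
\]
and the uniform convexity of $\wsob$ upgrades the weak convergence $u_h\rightharpoonup u$ to strong convergence, which is the claim.

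The main obstacle is precisely this compactness of the nonlinear term: having dropped radial symmetry of the $u_h$, we lose both the compact Sobolev embedding and the uniform decay, so compactness must be extracted solely from~\eqref{simmcondtiondef}. The point to get right is that the uniform decay of the symmetrizations $u_h^*$ (radial, bounded in $\wsobr$) is transferred to $u_h$ through the smallness of $u_h-u_h^*$ in $L^p\cap L^{p^*}$, which rules out vanishing and dichotomy and hence delivers strong $\elle{q}$ convergence; once this is secured, the genuinely quasi-linear ingredients — almost everywhere convergence of the gradients and admissibility of $u$ as a test function — go through as in the radial case.
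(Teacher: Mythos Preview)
Your argument is correct and follows essentially the same route as the paper: both proofs use P\'olya--Szeg\H{o} to bound $(u_h^*)$ in $\wsobr$, extract strong $L^q$ convergence of $u_h^*$ via the compact radial embedding, transfer it to $u_h$ by interpolating~\eqref{simmcondtiondef}, and then rerun the machinery of Proposition~\ref{comp}. The only cosmetic difference is in the passage $\intr g(u_h)u_h\to\intr g(u)u$: the paper extracts a dominating function $\zeta\in L^q$ from the strong $L^q$ convergence and applies Fatou's lemma to $\eps|u_h|^p+C_\eps\zeta^q-g(u_h)u_h\ge 0$, whereas you invoke Vitali; both work, though your phrasing should make explicit that the $\eps|u_h|^p$ term is handled by letting $\eps\to 0$ afterwards (mere $L^1$-boundedness does not give uniform integrability).
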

\begin{proof}
	Given a concrete-$(SBPS)_{c}$ sequence $(u_h)\subset W^{1,p}(\R^N)$,
	as in the proof of Proposition~\ref{comp}, up to a subsequence, $(u_h)$ converges
	to a $u$ weakly, almost everywhere and, in addition, $Du_h$ converges to $Du$
	almost everywhere. The main difference with respect to Proposition~\ref{comp}
	is that the crucial limit
\begin{equation}
	\label{implimit}
	\lim_h \int_{\R^N} g(u_h)u_h=\int_{\R^N} g(u)u,
\end{equation}
admits now a different justification. Since $(u_h^*)\subset W^{1,p}_{{\rm rad}}(\R^{N})$ and $(u_h)$ is bounded
in $W^{1,p}(\R^N)$, then $(u_h^*)$ is bounded in $W^{1,p}(\R^N)$ too
by virtue of Polya-Szeg\"o inequality. Therefore, since for every $p<q<p^*$ the injection map $i:W^{1,p}_{{\rm rad}}(\R^N)\to L^q(\R^N)$ 
is completely continuous, up to a subsequence, it follows that 
$u_h^*\to z$ in $L^q(\R^N)$ as $h\to\infty$ for some $z\in L^q(\R^N)$, for $p<q<p^*$.
Due to $\|u_h-u_h^*\|_{L^p\cap L^{p^*}(\R^N)}\to 0$ we get
$u_h\to z$ in $L^q(\R^N)$, as
\begin{equation*}
\|u_h-z\|_{L^q(\R^N)} \leq C\|u_h-u_h^*\|_{L^p\cap L^{p^*}(\R^N)}+\|u_h^*-z\|_{L^q(\R^N)}.
\end{equation*}
Of course $z=u$, allowing to conclude that
\begin{equation}
	\label{startlimi1}
u_h\to u\quad\text{in $L^q(\R^N)$ as $h\to\infty$, \,\,\, for every $p<q<p^*$.}
\end{equation}
In light of \eqref{startlimi1}, for a $p<q<p^*$ there exists $\zeta \in L^q(\R^N)$, $\zeta\geq 0$, such that
$|u_h|\leq\zeta$ for every $h\geq 1$. In turn, by assumption \eqref{ipog2}, for all $\eps>0$ 
there exists $C_\eps\in\R$ with
\begin{equation*}
 \eps|u_h|^p+C_\eps \zeta^q - g(u_h)u_h\geq 0.
\end{equation*}
Then, by Fatou's Lemma, by the arbitrariness of $\eps$ and the boundedness of $(u_h)$ in $L^p(\R^N)$, 
$$
\limsup_h \intr g(u_h)u_h\leq \intr g(u)u.
$$
Of course, since $g(u_h)u_h\geq 0$, again by Fatou's Lemma one also has
$$
\liminf_h \intr g(u_h)u_h\geq \intr g(u)u,
$$
concluding the proof of formula \eqref{implimit}
\end{proof}

\noindent
Next, we state the main technical tool for the proof of the second theorem.
\begin{lemma}\label{montrick1}
Assume that conditions \eqref{j0}-\eqref{V0} and \eqref{ipog2}-\eqref{pol} hold
and that $f_\lambda$ satisfies the concrete-$(SBPS)_{c}$ for all $c\in\R$ and all $\lambda\in [\delta_0,1]$. Then there exists
a sequence $(\lambda_j,u_j)\subset [\delta_0,1]\times W^{1,p}(\R^N)$ with $\lambda_j\nearrow 1$ where $u_j$ is
a distributional solution of
\begin{equation*}
-{\rm div}\Big[j_{t}(u,|Du|)\frac{Du}{|Du|}\Big]+j_{s}(u,|Du|)+V(|x|) u^{p-1}=\lambda_j g(u)\quad\,\,\, \text{in $\R^{N}$},
\end{equation*}
such that $f_{\lambda_j}(u_j)=c_{\lambda_j}$ and $u_j=u_j^*$.
\end{lemma}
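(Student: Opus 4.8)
The plan is to mirror the proof of Lemma~\ref{montrick}, but now invoking the \emph{symmetric} version of the monotonicity trick from \cite{montrick}, which produces almost symmetric (indeed, here exactly symmetric) approximate critical points. First I would set up the abstract framework of \cite[Corollary~3.3]{montrick} (or its symmetric counterpart) with $X=W^{1,p}(\R^N)$, $S=W^{1,p}_{{\rm rad}}(\R^N)$, and $V=W^{1,p}(\R^N)$; the polarization/symmetrization maps $u\mapsto u^H$ (polarization with respect to a halfspace $H$) and $u\mapsto u^*=|u|^*$ (Schwarz symmetrization) take the role of the maps called $u^H$ and $u^*$ there. The point of working on all of $W^{1,p}(\R^N)$ rather than only the radial subspace is precisely that the minimax class $\Gamma$ is now the global one, so we need the symmetry of the functional under these rearrangements to force the approximate solutions into the radial, radially decreasing cone.

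The key steps, in order: (i) verify the uniform Mountain Pass geometry $(\mathcal H_1)$, $(\mathcal H_2)$ for $(f_\lambda)$ on $W^{1,p}(\R^N)$ — this is exactly Lemmas~\ref{geo1} and~\ref{geo2}, which hold verbatim on the full space (the curve $\eta(t)=z(\cdot/t)$ with $z$ Schwarz symmetric lives in $W^{1,p}_{{\rm rad}}(\R^N)\subset W^{1,p}(\R^N)$, and the lower bound at $\|u\|_{1,p}=\rho$ uses only \eqref{j0}, \eqref{V0}, \eqref{ipog2}); (ii) verify the monotonicity/structural hypothesis $(\mathcal H_3)$ by the same direct computation as in Lemma~\ref{montrick}, using that $G\ge0$ so that $\lambda\mapsto f_\lambda(u)$ is non-increasing with controlled derivative; (iii) verify the symmetry hypothesis $(\mathcal H_4)$: here one must check that $f_\lambda(u^H)\le f_\lambda(u)$ for every polarization $H$, which follows from the rearrangement behavior of each term — $\intr j(u,|Du|)$ and $\intr V(|x|)|u|^p/p$ do not increase under polarization (using the monotonicity of $t\mapsto j(s,t)$ together with the Cauchy–Schwarz / Polya–Szeg\H o type inequality for polarization, and \eqref{pol} for the potential term), while $\intr G(u)$ is invariant, so $f_\lambda(u^H)\le f_\lambda(u)$; this is the place where assumption \eqref{pol} is genuinely used and is the crux of the whole argument. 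With $(\mathcal H_1)$–$(\mathcal H_4)$ in hand, \cite[Corollary~3.3]{montrick} (symmetric version) yields, for a.e.\ $\lambda\in[\delta_0,1]$, a bounded concrete-$(SBPS)_{c_\lambda}$ sequence; feeding this into Proposition~\ref{comp2} gives a genuine critical point $u_\lambda$ of $f_\lambda$ at level $c_\lambda$ with $u_\lambda=u_\lambda^*$. Choosing $\lambda_j\nearrow1$ along a full-measure set produces the asserted sequence $(\lambda_j,u_j)$; that $u_j$ is a distributional solution with respect to \emph{all} test functions in $C^\infty_c(\R^N)$ (not merely radial ones) follows as in Lemma~\ref{montrick} from \cite[Theorem~4.1 and end of proof of Theorem~6.4]{sqradII}, and the radial symmetry together with $u_j=u_j^*$ gives that $u_j$ is radially decreasing; nonnegativity follows by testing with $-u_j^-$ exactly as at the end of the proof of Theorem~\ref{rad}.

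I expect the main obstacle to be step~(iii), namely checking the symmetry hypothesis $(\mathcal H_4)$ of \cite{montrick} — in particular, that the nonlinear principal part $\intr j(u,|Du|)$ behaves correctly under polarization. Since $j$ is only assumed $C^1$, increasing and strictly convex in $t$ with the two-sided bound \eqref{j0}, one cannot appeal to a clean $p$-homogeneity; one must instead use that polarization preserves the distribution function of $|u|$ and does not increase $\|D(u^H)\|_{L^p}$ in a pointwise-in-level sense, and combine this with the monotonicity and convexity of $j$ in its second argument. A secondary technical nuisance is making precise the compatibility between the polarization map used on $W^{1,p}(\R^N)$ and the almost-symmetry condition \eqref{simmcondtiondef} appearing in the definition of concrete-$(SBPS)_c$; but since the outcome of the trick in this homogeneous-enough setting is $u_j=u_j^*$ exactly (the $L^p\cap L^{p^*}$ defect being zero), this reduces to quoting the relevant statement of \cite{montrick} rather than re-proving it. Everything else is a routine transcription of the proof of Lemma~\ref{montrick} with Proposition~\ref{comp} replaced by Proposition~\ref{comp2}.
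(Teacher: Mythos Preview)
Your overall strategy matches the paper's: apply \cite[Corollary~3.3]{montrick} on $X=W^{1,p}(\R^N)$, verify $(\mathcal H_1)$--$(\mathcal H_4)$, and use the concrete-$(SBPS)_c$ condition to extract the critical points $u_j=u_j^*$. However, your identification of the auxiliary spaces $S$ and $V$ is off, and this is not merely cosmetic.

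The paper takes $S=W^{1,p}(\R^N,\R^+)$ (the cone of nonnegative functions) and $V=L^p(\R^N)\cap L^{p^*}(\R^N)$. Your choice $S=W^{1,p}_{{\rm rad}}(\R^N)$ cannot work, because a single polarization $u\mapsto |u|^H$ does not land in the radial functions; $S$ must be the set where the polarized functions live, which is the nonnegative cone. More importantly, your $V=W^{1,p}(\R^N)$ is a genuine obstruction: the abstract machinery of \cite{montrick} needs iterated polarizations to approximate the Schwarz symmetrization in the $V$-norm, and this approximation is known to hold in $L^p\cap L^{p^*}$ but \emph{fails} in $W^{1,p}$ in general. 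This is exactly why the almost-symmetry condition \eqref{simmcondtiondef} in the definition of concrete-$(SBPS)_c$ is phrased in $L^p\cap L^{p^*}$, and why the paper makes the matching choice for $V$.

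Two smaller points. Your concern about $(\mathcal H_4)$ for the principal part is misplaced: for nonnegative $u$, polarization is a measure-preserving rearrangement of the pair $(u,|Du|)$ over reflected points, so $\int_{\R^N} j(u^H,|Du^H|)=\int_{\R^N} j(u,|Du|)$ with \emph{equality}, requiring neither monotonicity nor convexity of $j$ in $t$; only the potential term produces a genuine inequality, via \eqref{pol}. Finally, the appeal to \cite{sqradII} to upgrade to arbitrary test functions is unnecessary here: since you work on $X=W^{1,p}(\R^N)$ rather than on the radial subspace, the critical points furnished by \cite{montrick} are already distributional solutions against every $\varphi\in C^\infty_c(\R^N)$, and no symmetric criticality principle is needed.
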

\begin{proof}
The result follows by applying \cite[Corollary 3.3]{montrick} with the following choice of spaces:
$X=W^{1,p}(\R^N)$, $S=W^{1,p}(\R^N,\R^+)$ and $V=L^p\cap L^{p^*}(\R^N)$. In fact, it is readily
verified that assumptions $({\mathcal H}_1)$-$({\mathcal H}_4)$ in \cite[section 3.1]{montrick} are fulfilled
with $u^H=|u|^H$, where $v^H$ denotes the standard polarization of $v\geq 0$ and with $u^*=|u|^*$ where $v^*$ denotes 
the Schwarz symmetrization of $v\geq 0$.
Condition $({\mathcal H}_1)$ is just the continuity of the functionals $f_\lambda$. Condition $({\mathcal H}_2)$ is satisfied 
since Lemma~\ref{geo1} and Lemma~\ref{geo2} hold with the same proof (notice that the function
$z$ in the proof of Lemma~\ref{geo1} satisfies $z=z^*$). Condition $({\mathcal H}_3)$ follows, as in the proof of Lemma~\ref{montrick}
by a simple direct computation.
Assumption $({\mathcal H}_4)$ is satisfied by \eqref{pol} and standard arguments (see also \cite[Remark 3.4]{montrick}).
Notice that the function $w=\gamma(1)=z(x/t_0)$ detected in Lemma~\ref{geo1} and used to build the minimax class $\Gamma$
is radially symmetric and radially decreasing, so that $w^H=w$ for
every half space $H$, as required in $({\mathcal H}_4)$.
\end{proof}

\subsection{Proof of Theorem \ref{sym} concluded}
The proof goes along the lines of the proof of Theorem~\ref{rad} by simple adaptations of the
preparatory results contained in Section \ref{proofradthm} to the new setting. With respect to 
the main differences in the proofs, it is sufficient
to replace Proposition~\ref{comp} with Proposition~\ref{comp2} and Lemma~\ref{montrick} with Lemma~\ref{montrick1}.

\begin{remark}\rm
	\label{finalremark}
In the notations $c$ and $c_{{\rm rad}}$ mentioned at the end of the introduction,
we always have $c\leq  c_{{\rm rad}}$. On the other hand,
when $V$ is constant and the function $t\mapsto j(s,t)$ is $p$-homogeneous, then $c\geq  c_{{\rm rad}}$.
In fact, let $u_r$ be a radial solution at level $c$ provided by Theorem \ref{sym}, namely $f_1(u_r)=c$. Then, defining the radial curve
$\gamma_r(t)(x):=u_r(x/tt_0)$, which belongs to $C([0,1],W^{1,p}_{{\rm rad}}(\R^N))$ for a suitable $t_0>1$ 
and arguing as in \cite[Step I, proof of Theorem 3.2]{giasqu} through Poh\v ozaev identity, it follows that 
$$
c=f_1(u_r)=\max_{t\in [0,1]} f_1(\gamma_r(t)),
$$
immediately yielding $c\geq  c_{{\rm rad}}$, as desired.
\end{remark}

\vskip6pt
\noindent
{\bf Acknowledgment.} The authors wish to thank Jean Van Schaftingen for a useful discussion about the comparison
between the Mountain Pass levels $c_{{\rm rad}}$ and $c$.
\bigskip
\medskip

\end{document}